\tikzset{node distance=2cm, auto}
\newtheorem{thm}{Theorem}[section]
\newtheorem{prop}[thm]{Proposition}
\newtheorem{lma}[thm]{Lemma}
\newtheorem{cor}[thm]{Corollary}
\theoremstyle{definition}
\newtheorem{defn}[thm]{Definition}
\newtheorem{eg}[thm]{Example}
\theoremstyle{remark}
\newtheorem{rmk}[thm]{Remark}
\title{A commutative model for PL compactly supported cohomology in charactersitic zero}
\author{Tom Sutton}
\email{tombsutton@gmail.com}
\begin{document}

\begin{abstract}
In this paper, we show that that classical rational homotopy theory in the sense of Sullivan \cite{sullivaninfinitesimal} can be extended compactly supported setting.  This presents a simplicial version of the compactly supported de Rham complex in characteristic zero, and proving that it models singular compactly supported cohomology.  This presents new avenues of possible study in proper homotopy theory, and further extensions of the ideas within the classical literature of Quillen \cite{quillenrht} and Sullivan.
\end{abstract}

\maketitle

%\onehalfspacing

\section*{Introduction}

Throughout this paper, given a simplicial set $X$, and a collection of simplices $S$ of $X$, we denote by $<S>$ the simplicial subset of $X$ generated by $S$.  If $X$ is a simplicial set, and we write $K\subset X$, then unless otherwise stated, we will assume that $K$ is a subsimplicial set, rather than just a collection of simplices in $X$ (although on at least one occasion it will simply be the latter).

\vspace{3mm}

In this paper, we show that Sullivan's method of PL polynomial forms for rational homotopy theory naturally extends to the compactly supported setting: in particular giving an explicit commutative model for compactly supported cohomology, that applies to all CW complexes, rather than just manifolds.  An unfortunate feature of the category of manifolds is that many colimits do not exist in it, and this has led various authors to consider generalisations, the earliest of which was Chen's \emph{Chen spaces} in 1973.  This was followed by Souriau's \emph{diffeological spaces} in 1980.  The categories of both are complete and cocomplete, and have other nice categorical properties (locally cartesian closed, weak subobject classifier).  This interest beyond the category of manifolds means it is both interesting and helpful to see how much of the theory applying to manifolds can be carried over in some way.  In particular, in \cite{haraguchi}, Haraguchi develops a theory of compactly supported cohomology for diffeological spaces.  Our work in this chapter can be viewed as a simplicial analogue of Haraguchi's work.

\vspace{3mm}

We also feel that this work has potentially deep proper homotopical implications.  Sullivan's original polynomial de Rham complex gives (under an array of conditions) an equivalence of two homotopy theories, and so one might expect that a compactly supported de Rham complex could give an equivalence between certain \say{proper homotopy theories}.  The sense in which \say{proper homotopy theory} would be meant is not completely clear, although the work of Baues and Quintero in \cite{infhom} seems appropriate, and would need to be reworked for simplicial sets.  The fundamental change in moving from homotopy theory to proper homotopy theory is that the notion of a model structure is no longer applicable, as the category of spaces with proper maps is far from having a natural model structure (it doesn't even have a terminal object).  Instead, the category of spaces and proper maps has the structure of a \emph{cofibration category}, and this is the axiomatic framework in which \cite{infhom} largely works.  We have not had the time to pursue these homotopical questions, but think this would be a very interesting future project.

\vspace{3mm}

We will now begin the chapter by setting up the basic notions, and reminding the reader of important earlier constructions.

\vspace{3mm}
%Need to include something about nabla: copy over a bit more from thesis.
Recall the simplicial CDGA over $k$ denoted by $\nabla(*,*)$, defined in \hyperref[nabla]{\ref*{nabla}}.
%
%\begin{defn}
%Let $k$ be any field of characteristic $0$.  [Bousfield and Gugenheim] define a simplicial commutative dga over %$k$, denoted $\nabla (*,*)$, where for all $p$, $\nabla (p,*)$ is the simplicial cdga given by
%\[(\nabla (*,*))_p:=\nabla (p,*):= k[t_0,...t_p]\otimes \wedge(dt_0,...dt_p)/(t_0+...+t_p-1, dt_0+...+dt_p)\]
%where $d(t_i)=dt_i$, for all $i$, and the $t_i$ are concentrated in degree $0$.\\
%The face and degeneracy maps are given by $\partial_i:\nabla(p,*)\to \nabla(p-1,*)$ and $s_j:\nabla(p,*)\to %\nabla(p+1,*)$ ($0\le i,j\le p$), where
%\[\partial_i(t_m)=t_{m-1}\text{, for } i\le m\]
%\[\partial_i(t_m)=0\text{, for } i=m\]
%\[\partial_i(t_m)=t_{m}\text{, for } i\ge m\]
%\[s_j(t_m)=t_{m+1}\text{, for } j\le m\]
%\[s_j(t_m)=t_{m}+t_{m+1}\text{, for } j=m\]
%\[s_j(t_m)=t_{m}\text{, for } j\ge m\]
%Where it is understood that the $t_i$ on either side of the equations above live in different simplicial degrees.  $\nabla(p,q)$ should be thought of as the $k$-module of $\emph{polynomial}$ $q$-forms on a standard $p$-simplex, where the variables $t_i$ could be viewed as the coordinates of the simplex, ranging from $0$ to $1$.
%
%\begin{rmk}
%Notice that we can consider $\nabla(p,*)$ to be freely generated (as a commutative $k$-dga)by $t_1,...t_p$ and $dt_1,...dt_p$.  This is how we will usually think of it.
%
%\end{rmk}
%
%\end{defn}

\begin{defn}
A simplicial set $X$ is $\emph{finite}$ if it has only finitely many non-degenerate simplices, and is $\emph{locally finite}$ if every simplex is a face of only finitely many non-degenerate simplices.

\end{defn}

\begin{defn}
Let $X$ be a simplicial set.  $\nabla(*,*)$ allows the authors of \cite{bousgug} to define the CDGA $A^*X$of polynomial forms on $X$, by
\[A^qX:=sSet(X,\nabla(*,q))\]
Correspondingly, we define the CDGA $A_c^*X$ of $\emph{compactly supported}$ polynomial forms on $X$ by
\[A_c^qX:=\{ \Phi\in sSet(X,\nabla(*,q))|\exists\text{ finite } K\subset X\text{ s.t }\Phi|_{<X\setminus K>}=0\} \]
Given $\phi \in A^qX$ and a $q$-simplex $\sigma$ of $X$, we will often write $\phi |_{\sigma}$ in place of $\phi (\sigma )$.

\end{defn}

\begin{rmk}
$A^qX$ can be thought of as all possible ways of assigning a $q$-form to each simplex of $X$, in a manner which is compatible with the face and degeneracy operators.  Thus $A^qX$ is analagous to the global sections of a sheaf of functions on a space.  It is easily checked that $A^*X$ and $A_c^*X$ are both well defined CDGAs.  However, if $X$ is not finite, the latter does not have any unit element as non-zero constant $0$-forms are not compactly supported.  This can be re-interpreted as the fact that the category of spaces with proper continuous maps has no terminal object.

\end{rmk}

We now give what will be an essential property of $\nabla (*,q)$ for our purposes.  We will refer to this property as the \emph{extension property}.  The geometric interpretation of this property is that if one has a $q$-form defined on the boundary of simplex, then that form can be extended to a $q$-form on the entire simplex.  Expressing this for $\nabla(*,q)$, the property states that if we are given forms $\omega_0,...,\omega_p\in \nabla (p-1,q)$ that model a form on the boundary of a $p$-simplex (the necessary condition for this is that $\partial_i\omega_j=\partial_{j-1}\omega_i$, for all $i$ and $j$), then there exists a form $\omega \in \nabla (p,q)$ with $\partial_i\omega =\omega_i$, for all $i$.  In addition, this can be done in a way which is linear with respect to addition of forms.

\vspace{3mm}

We now state the extension property described above in its precise form.

\begin{prop} (The Extension Property.  Corollary $1.2$ of \cite{bousgug})\\
\label{extension_prop}
There exists a naturally defined function
\[E:\{ (w_0,...,w_p)|w_k\in \nabla(p-1,q)\text{ for all }k\text{, }\text{ }\partial_iw_j=\partial_{j-1}w_i\text{ for all }i\le j\}\to \nabla(p,q)\]
such that
\[\partial_i(E(w_0,...,w_p))=w_i\]
for all $i$, and
\[E(w_0,...,w_p)+E(w_0',...,w_p')=E(w_0+w_0',...,w_p+w_p')\]

\end{prop}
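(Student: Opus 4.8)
The plan is to construct $E$ by repairing the faces of a prospective filler one at a time, using only the face and degeneracy operators of the simplicial $k$-module $\nabla(*,q)$; recall that these operators are $k$-linear, since $\nabla(p,q)$ is the vector space of polynomial $q$-forms on the standard $p$-simplex and both restriction to a face and pullback along a collapse are linear. Given a compatible tuple $(w_0,\dots,w_p)$, I would first build, for $k=0,1,\dots,p-1$, elements $\omega_k\in\nabla(p,q)$ satisfying $\partial_i\omega_k=w_i$ for $0\le i\le k$, by setting $\omega_0:=s_0w_0$ and $\omega_k:=\omega_{k-1}+s_k\bigl(w_k-\partial_k\omega_{k-1}\bigr)$. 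The verification here is pure simplicial algebra: the identities $\partial_ks_k=\mathrm{id}$ and $\partial_is_k=s_{k-1}\partial_i$ (for $i<k$) give $\partial_k\omega_k=w_k$ and leave the faces $0,\dots,k-1$ unchanged, \emph{provided} the correction term $w_k-\partial_k\omega_{k-1}$ is annihilated by every $\partial_i$ with $i<k$; and this in turn follows from $\partial_i\partial_k=\partial_{k-1}\partial_i$ together with the compatibility hypothesis $\partial_iw_k=\partial_{k-1}w_i$, since then $\partial_i(w_k-\partial_k\omega_{k-1})=\partial_iw_k-\partial_{k-1}\partial_i\omega_{k-1}=\partial_iw_k-\partial_{k-1}w_i=0$.

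This recursion cannot reach the top face, because $s_p$ does not exist, and it is here that the argument stops being formal. After the final step, the element $u:=w_p-\partial_p\omega_{p-1}$ satisfies $\partial_iu=0$ for \emph{all} $0\le i\le p-1$ (the same computation, now valid for every $i<p$), so $u$ is a cycle of degree $p-1$ in the normalised Moore complex $N_\bullet$ of $\nabla(*,q)$. At this point I would invoke the one piece of genuine input: that the simplicial $k$-module $\nabla(*,q)$ is acyclic --- equivalently, that restriction of polynomial forms from $\Delta^p$ to $\partial\Delta^p$ is surjective, which is the content of \cite{bousgug} and ultimately rests on the algebraic Poincar\'e lemma and hence on $\mathrm{char}\,k=0$. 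Acyclicity lets us fix, once and for all, a $k$-linear contracting homotopy $h$ on $N_\bullet$; then $\eta:=h(u)$ lies in $N_p$, so $\partial_i\eta=0$ for $i<p$ and $\partial_p\eta=u$, and putting $E(w_0,\dots,w_p):=\omega_{p-1}+\eta$ gives $\partial_i\bigl(E(w_0,\dots,w_p)\bigr)=w_i$ for every $i$.

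It then remains to record additivity and canonicity. Each $\omega_k$ is a $k$-linear function of $(w_0,\dots,w_k)$, so $u$ is $k$-linear in $(w_0,\dots,w_p)$ and $\eta=h(u)$ is linear in $u$; hence $E$ is $k$-linear, in particular additive. And ``naturally defined'' holds simply because nothing in the construction --- the order in which faces are repaired, the degeneracies used, the homotopy $h$ --- depends on the input tuple. I expect the main obstacle to be exactly the top-face step: every other part of the argument is valid verbatim in an arbitrary simplicial abelian group, whereas filling an entire boundary sphere, as opposed to a horn, genuinely requires the homotopical fact that $\nabla(*,q)$ is contractible, and that is the single place where the special structure of polynomial forms enters.
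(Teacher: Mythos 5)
The first half of your argument is correct and cleanly executed: the recursion $\omega_0=s_0w_0$, $\omega_k=\omega_{k-1}+s_k\bigl(w_k-\partial_k\omega_{k-1}\bigr)$ is exactly the standard proof that a simplicial abelian group fills horns, and your verification via $\partial_ks_k=\mathrm{id}$, $\partial_is_k=s_{k-1}\partial_i$ and $\partial_i\partial_k=\partial_{k-1}\partial_i$ together with the compatibility hypothesis is right, as is the observation that $u=w_p-\partial_p\omega_{p-1}$ is a cycle in the Moore complex. The gap is the step you yourself flag as ``the one piece of genuine input.'' You need $\pi_{p-1}$ of the simplicial $k$-module $\nabla(*,q)$ to vanish (plus a linear contracting homotopy, which over a field is free once acyclicity is known), and you justify this by saying it is ``equivalently, that restriction of polynomial forms from $\Delta^p$ to $\partial\Delta^p$ is surjective'' and citing \cite{bousgug}. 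But that surjectivity \emph{is} the existence clause of the proposition being proved; you have reduced the statement to an equivalent statement and then cited the statement itself. Moreover, the justification you offer --- the algebraic Poincar\'e lemma and $\mathrm{char}\,k=0$ --- concerns the acyclicity of the cochain complex $(\nabla(p,*),d)$ for fixed $p$, i.e.\ the de Rham direction. What you need is acyclicity of $\nabla(*,q)$ for fixed $q$ in the simplicial direction, with respect to the face operators $\partial_i$; these are different statements, neither implies the other in any obvious way, and the one you need does not in fact require characteristic zero (extendability of polynomial forms holds over any ground ring; it is the Poincar\'e lemma and the integration/Stokes machinery that need $\mathbb{Q}\subseteq k$).

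For comparison: the paper does not prove this proposition but imports it as Corollary $1.2$ of \cite{bousgug}, where the argument is an explicit face-by-face construction rather than a homotopy-theoretic one. For \emph{every} face $i$, including $i=p$, one writes a form on the face $\{t_i=0\}$ as a polynomial expression in the remaining barycentric coordinates and their differentials and re-reads that expression as a form on all of $\Delta^p$; a further explicit correction arranges that the extension of a form vanishing on faces $0,\dots,i-1$ still vanishes there. Plugged into your induction, this handles all $p+1$ faces uniformly, leaves no residual obstruction class at the top face, and gives linearity and naturality of $E$ for free. Your approach stalls at the top face precisely because you restrict yourself to the abstract degeneracies, and $s_p$ does not exist; to complete it you would need either such a concrete extension operator for the last face, or an independent proof that $\pi_{p-1}(\nabla(*,q))=0$. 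As written, the single non-formal ingredient is exactly the one left unproved.
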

\hyperref[extension_prop]{\ref*{extension_prop}} is the $\emph{extension property}$ for $\nabla(*,*)$, and geometrically means that if we have a form defined over the boundary of a simplex, we can extend it to the entire simplex.

\vspace{3mm}

Bousfield and Gugenheim in \cite{bousgug} go on to prove that $\nabla(p,*)$ is acyclic (the Poincar\'{e} lemma), and they also define a process of formal integration of forms over a simplex, which we review now.

\begin{defn}
\label{int_defn}
Suppose $w\in \nabla(p,p)$ is given by $w=f(t_1,...,t_p)dt_1...dt_p$, where $f$ is a polynomial.  Let $|\Delta^p|$ denote the standard $p$-simplex in $\mathbb{R}^{p}$ given by $0\le t_i\le 1$ and $0\le t_1+...+t_p\le 1$.  Then since $k$ has characteristic $0$ we can compute $\int_{|\Delta^p|}fdt_1...dt_p$ term by term as an integral over $\mathbb{R}$, the answer being a polynomial with coefficients in $k$, and so we define
\[ \int w:=\int_{|\Delta^p|}fdt_1...dt_p\]

\end{defn}
There is a total differential
\[ \partial :\nabla(p,q)\to \nabla(p-1,q)\]
defined by $\partial=\Sigma_{i=0}^p(-1)^i\partial_i$, satisfying $\partial d=d\partial$.  $\partial$ should be thought of as simply restricting a form to its boundary.  We now have the following

\begin{prop} (Stokes' Theorem (Proposition $1.4$ of \cite{bousgug})\\
For any $w\in \nabla (p,p-1)$
\[ \int dw=\int \partial w\]

\end{prop}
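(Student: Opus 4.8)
The plan is to reduce Stokes' theorem for $\nabla$ to the classical Stokes theorem on the geometric simplex $|\Delta^p|$, where it is just the fundamental theorem of calculus applied term by term. After eliminating $t_0=1-t_1-\cdots-t_p$, the CDGA $\nabla(p,*)$ is the free graded-commutative algebra $k[t_1,\dots,t_p]\otimes\Lambda(dt_1,\dots,dt_p)$, the formal integral of Definition \ref{int_defn} is the genuine (polynomial) integral over $|\Delta^p|$ against the positive measure $dt_1\cdots dt_p$, and the face operator $\partial_i\colon\nabla(p,q)\to\nabla(p-1,q)$ is pullback along the $i$-th coface map $|\Delta^{p-1}|\hookrightarrow|\Delta^p|$, whose image is the facet $\{t_i=0\}$. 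Since $d$, $\partial$ and $\int$ are all $k$-linear, it suffices to verify $\int dw=\int\partial w$ for $w$ running over the spanning set of $\nabla(p,p-1)$ consisting of the forms $w=g\,\omega_j$, where $g\in k[t_1,\dots,t_p]$ and $\omega_j:=dt_1\wedge\cdots\wedge\widehat{dt_j}\wedge\cdots\wedge dt_p$, for $j=1,\dots,p$.

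For such a $w$, a short computation gives $d(g\,\omega_j)=(-1)^{j-1}\,\frac{\partial g}{\partial t_j}\,dt_1\wedge\cdots\wedge dt_p$, so that $\int d(g\,\omega_j)=(-1)^{j-1}\int_{|\Delta^p|}\frac{\partial g}{\partial t_j}$. Integrating out $t_j$ first (it runs from $0$ to $1-\sum_{i\neq j}t_i$) and applying the fundamental theorem of calculus rewrites this as $(-1)^{j-1}$ times the difference between the integral of $g|_{t_0=0}$ over the facet $\{t_0=0\}=\{t_1+\cdots+t_p=1\}$ and the integral of $g|_{t_j=0}$ over the facet $\{t_j=0\}$. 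On the other side, $\partial(g\,\omega_j)=\sum_{i=0}^p(-1)^i\partial_i(g\,\omega_j)$; one checks directly that $\partial_i(g\,\omega_j)=0$ for $1\le i\le p$ with $i\neq j$ (the coface fixing $t_i=0$ pulls $dt_i$, a factor of $\omega_j$, back to $0$), that $\partial_j(g\,\omega_j)$ is the restriction of $g$ to $\{t_j=0\}$ times the standard volume form, and that $\partial_0(g\,\omega_j)$ is $(-1)^{j-1}$ times the restriction of $g$ to $\{t_0=0\}$ times the standard volume form. Matching these term by term, and using $-(-1)^{j-1}=(-1)^j$, yields $\int d(g\,\omega_j)=\int\partial(g\,\omega_j)$.

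The one genuinely delicate ingredient --- essentially the only content beyond the fundamental theorem of calculus --- is the verification of these signs, in particular the factor $(-1)^{j-1}$ in $\partial_0(g\,\omega_j)$, which arises because the $0$-th coface, written in coordinates $u_1,\dots,u_{p-1}$ on $|\Delta^{p-1}|$, pulls $dt_1$ back to $-(du_1+\cdots+du_{p-1})$ rather than to a single coordinate differential; simultaneously one must check that the two positive-measure integrals over each facet (the one produced by the fundamental theorem of calculus in ambient coordinates and the one defining $\int\partial_i$ in the intrinsic coordinates of $|\Delta^{p-1}|$) are computed against coordinate charts of unit covolume, so that they coincide on the nose. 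Equivalently, this is the standard orientation bookkeeping that the combinatorial sign $(-1)^i$ in $\partial=\sum_i(-1)^i\partial_i$ matches the sign with which the facet $\{t_i=0\}$ occurs in the oriented boundary of $|\Delta^p|$. For the faces $1\le i\le p$ the relevant relabeling of coordinates is order-preserving and carries no sign, so all the care is concentrated in the $i=0$ facet. Once the signs are pinned down the proposition follows; alternatively, one may simply invoke the classical Stokes theorem for polynomial forms on $|\Delta^p|$ and contribute only the dictionary identifying $\partial_i$ with ``restrict to the $i$-th facet with sign $(-1)^i$''.
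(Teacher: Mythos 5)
Your argument is correct. Note that the paper does not prove this statement at all --- it is quoted verbatim as Proposition 1.4 of Bousfield--Gugenheim --- so there is no in-paper proof to compare against; what you have written is essentially the standard argument from that source: reduce by linearity to $w=g\,dt_1\wedge\cdots\wedge\widehat{dt_j}\wedge\cdots\wedge dt_p$, apply the fundamental theorem of calculus in the $t_j$ direction, and match the two boundary terms against the $i=j$ and $i=0$ summands of $\partial=\sum_i(-1)^i\partial_i$ (all other $\partial_i$ killing $w$ because they pull back the factor $dt_i$ to $0$). Your sign bookkeeping checks out: $dt_j\wedge\omega_j=(-1)^{j-1}dt_1\wedge\cdots\wedge dt_p$, the $j$-th coface relabels coordinates order-preservingly so $\partial_j(g\,\omega_j)$ carries no sign, and the $0$-th coface sends $dt_1\mapsto-(du_1+\cdots+du_{p-1})$, producing exactly the compensating $(-1)^{j-1}$; you are also right to flag that the two parametrizations of each facet differ by an affine change of coordinates of determinant $\pm1$, so the positively-oriented polynomial integrals of Definition \ref{int_defn} agree on the nose.
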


	\section{PL bump functions}

One way of obtaining a Mayer-Vietoris sequence for $A_c^*$ is to first show that we have a PL analogue of bump functions.  The situation is made somewhat simpler than for smooth manifolds, since our functions can be piecewise smooth, however we are of course restricted within this to using piecewise polynomial functions.

\begin{defn}

Let $X$ be a simplicial set, and $K\subset X$ a simplicial subset.  Define the $\emph{minimal neighbourhood}$ of $K$ in $X$ to be
\[ \epsilon (K):=<\{ \sigma \in X|\partial_{i_1}...\partial_{i_{m_\sigma}}\sigma \in K\text{ for some $\{ i_j\} $}\} >\]
\end{defn}

\begin{eg}
Consider when $X$ is the standard tessellation of the plane using equilateral $2$-simplices.  This is in fact a simplicial complex, but we can view it as a simplicial set where the faces of each non-degenerate simplex are also non-degenerate.  In this case, if we take $L\subset X$ to a be a single vertex $v$, then $\epsilon (L)$ is a hexagon with $6$ non-degenerate $2$-simplices, all meeting $v$.

\end{eg}

\begin{thm} (Existence of PL bump functions)
\label{bump_functions}
Let $X$ be a simplicial set, and $L\subset K\subset X$ be subsimplicial sets such that $\epsilon (L)\subset K$.  Then there exists some $\phi \in A^0(X)$ such that $\phi |_L=1$ and $\phi |_{<X\setminus K>}=0$.

\end{thm}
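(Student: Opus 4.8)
The plan is to build $\phi$ by affine interpolation of a $\{0,1\}$-valued function on the vertices of $X$. Let $g\colon X_0\to k$ be the indicator function of $L_0\subseteq X_0$, that is $g(v)=1$ if the vertex $v$ lies in $L$ and $g(v)=0$ otherwise. I would define $\phi\in A^0(X)$ by letting, for each $n$-simplex $\sigma$ of $X$ with vertices $\sigma(0),\dots,\sigma(n)$, the form $\phi|_\sigma\in\nabla(n,0)$ be the unique affine polynomial taking the value $g(\sigma(i))$ at the $i$-th vertex; in barycentric coordinates $t_0,\dots,t_n$ this is
\[
\phi|_\sigma \;=\; \sum_{i=0}^{n} g(\sigma(i))\,t_i \;=\; \sum_{\,i\,:\,\sigma(i)\in L} t_i .
\]
The point of the argument will be that this simplest conceivable candidate already works, with the hypothesis $\epsilon(L)\subseteq K$ supplying exactly what is needed.

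First I would check that this assignment is genuinely a morphism of simplicial sets $X\to\nabla(*,0)$, i.e.\ an element of $A^0(X)$. Since the face and degeneracy operators generate all of $\Delta$, it is enough to verify compatibility with each $\partial_i$ and each $s_i$. This is the routine bookkeeping step: under $\partial_i\colon\nabla(n,0)\to\nabla(n-1,0)$ the coordinate $t_i$ maps to $0$ while the others are relabelled, which matches the fact that $\partial_i\sigma$ has vertices $\sigma(0),\dots,\widehat{\sigma(i)},\dots,\sigma(n)$; under $s_i$ the coordinate $t_i$ splits as $t_i+t_{i+1}$, which matches the doubled vertex of $s_i\sigma$. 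In particular $\phi$ restricts to an affine form on every simplex.

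It then remains to check the two required equalities. For $\phi|_L=1$: if $\sigma$ is a simplex of $L$ then, $L$ being a simplicial subset, each vertex $\sigma(i)$ is again a simplex of $L$, so $g(\sigma(i))=1$ for all $i$ and $\phi|_\sigma=\sum_{i=0}^{n}t_i=1$ in $\nabla(n,0)$; hence the restriction of $\phi$ to $L$ is the constant form $1$. For $\phi|_{<X\setminus K>}=0$: every simplex of $<X\setminus K>$ is of the form $\alpha^{*}\tau$ for some simplex $\tau$ of $X$ with $\tau\notin K$ and some $\alpha$ in $\Delta$, and since $\phi$ is simplicial $\phi(\alpha^{*}\tau)=\alpha^{*}\phi(\tau)$; so it suffices to show $\phi|_\tau=0$ whenever $\tau\notin K$. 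Here the hypothesis $\epsilon(L)\subseteq K$ enters: if $\tau\notin K$ then $\tau\notin\epsilon(L)$, and $\epsilon(L)$ contains every simplex having an iterated face in $L$ --- in particular every simplex having a vertex in $L$ --- so no vertex of $\tau$ lies in $L$. Consequently the index set of the sum defining $\phi|_\tau$ is empty, whence $\phi|_\tau=0$, as needed.

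I do not anticipate a genuine obstacle; the only mildly delicate points are formal. One is the compatibility check against all the simplicial operators --- the single fiddly but essentially content-free computation in the proof. The other is correctly identifying the simplices of $<X\setminus K>$, so that the reduction to checking individual simplices $\tau\notin K$ is legitimate. The conceptual content is simply that naive affine interpolation suffices, with $\epsilon(L)\subseteq K$ providing precisely the buffer that separates the region where $\phi=1$ from the region where $\phi=0$.
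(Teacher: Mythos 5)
Your proof is correct, and it takes a genuinely different route from the paper's. The paper constructs $\phi$ by induction on the skeleta of $<\epsilon(L)\setminus L>$: it fixes $\phi$ to be $1$ on $L$ and $0$ on simplices with no iterated face in $L$, and on the remaining simplices of the buffer region it invokes the extension property (Proposition \ref{extension_prop}) to extend an already-defined boundary form to the whole simplex, choosing the extension arbitrarily; most of the work then goes into checking that the resulting assignment is compatible with the face and degeneracy operators across the three regions. You instead write down a single closed formula, the affine interpolation $\phi|_\sigma=\sum_{i:\sigma(i)\in L}t_i$ of the indicator function of the vertices of $L$, and your verification is sound: simpliciality follows from the general formula $\alpha^*t_i=\sum_{j\in\alpha^{-1}(i)}t_j$ in $\nabla(*,0)$, which handles faces and degeneracies uniformly; $\phi|_L=1$ because a simplicial subset contains the vertices of each of its simplices; and vanishing on $<X\setminus K>$ reduces, by simpliciality, to simplices $\tau\notin K\supseteq\epsilon(L)$, which by definition of $\epsilon$ have no vertex in $L$. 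Your argument is shorter, avoids the extension property entirely (which is really only needed for forms of positive degree --- in degree $0$ interpolation does the job), produces a canonical $\phi$ natural in the pair $(X,L)$ rather than one depending on arbitrary choices, and also dispenses with the paper's preliminary reduction to the case $K=\epsilon(L)$. What the paper's method buys in exchange is a template that generalises to extending higher-degree forms from subcomplexes, which is the pattern reused elsewhere in the text; for the theorem as stated, your proof is the more economical one.
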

%Include a version of this which extends any form along any inclusion.
\begin{proof}
It suffices to prove the theorem in the case that $K=\epsilon (L)$, because the $\phi $ constructed in the proof for the case $K=\epsilon (L)$ will satisfy $\phi |_L=1$ and $\phi |_{<X\setminus K>}=0$, for any $K$ as in the statement of the theorem.  So we need to construct $\phi $ so that that $\phi_{<X\setminus \epsilon (L)>}=0$.  We begin by defining $\phi $ on $<\epsilon (L)\setminus L>$.
For each $m\ge 0$, denote the set of non-degenerate $m$-simplices of $<\epsilon (L)\setminus L>$ by $\Sigma^m:=\{ \sigma_{\alpha}|\alpha\in I_m\} $, for some indexing set $I_m$.  For each $\gamma\in I_0$, define
  \begin{equation}
   \phi|_{\sigma_{\gamma}^0}=
    \begin{cases}
      1, \text{if } \sigma_{\gamma}^0\in L \\
      0, \text{otherwise}
    \end{cases}
\nonumber
  \end{equation}
Now suppose $\phi $ has been defined on all simplices of $\Sigma^k$, for each $k<n$, in such a way that $\partial_i(\phi |_{\sigma_{\gamma}^k})=\phi |_{\partial_i\sigma_{\gamma}^k}$, for all $i$, for all $\gamma\in I_k$ and $k\le n$, and also that
  \begin{equation}
   \phi |_{\sigma_{\gamma}^k}=
    \begin{cases}
      1, \text{if } \sigma_{\gamma}^k\in L \\
      0, \text{if for all }s\ge 0\text{, and for all } i_1,...,i_s\text{, we have } \partial_{i_1}...\partial_{i_s}\sigma_{\gamma}^k\notin L
    \end{cases}
\nonumber
  \end{equation}
Then $\phi $ can be naturally extended to be defined on all degenerate simplices of $<\epsilon (L)\setminus L>$ of dimension $\le n$.  Then for all $\gamma\in I_n$ we can define
  \begin{equation}
    \phi |_{\sigma_{\gamma}^n}=
    \begin{cases}
      1, \text{if } \sigma_{\gamma}^n\in L \\
      0, \text{if for all } s\ge 0, \text{and for all } i_1,...,i_s\text{, we have } \partial_{i_1}...\partial_{i_s}\sigma_{\gamma}^n\notin L\\
      \text{any extension, otherwise}
    \end{cases}
\nonumber
    \end{equation}
Where in the last case, such an extension exists by the \hyperref[extension_prop]{extension property \ref*{extension_prop}}.  Now for all $\sigma \in <X\setminus \epsilon(L)>$, define $\phi |_{\sigma }=0$.  To show this gives a well-defined extension of $\phi $, we need to check that it agrees with the face and degeneracy operators, and that it agrees with the above definition of $\phi$ on $\epsilon (L)$.  Suppose $\sigma \in <X\setminus \epsilon(L)>\bigcap \epsilon(L)$.  Assume first that $\sigma $ is non-degenerate: as $\sigma \in <X\setminus \epsilon(L)>$, there exists some $\tau \in X\setminus \epsilon (L)$ such that $\gamma_1\gamma_2...\gamma_N\tau =\sigma $, where each $\gamma_j$ is some face or degeneracy map (and potentially $N=0$).  Now we can use the simplicial identities reorder the $\gamma_j$ so that $s_{n_1}...s_{n_t}\partial_{i_1}...\partial_{i_s}\tau =\sigma$ (where $t+s=N$), and now since $\sigma $ was assumed to be non-degenerate, we must have $\partial_{i_1}...\partial_{i_s}\tau =\sigma$.  Now since $\tau \notin \epsilon(L)$, the equation relating $\sigma $ and $\tau $ means that for all $q\ge 0$ and all $p_1,...,p_q$, we have $\partial_{p_1}...\partial_{p_q}\sigma \notin L$ and hence $\phi |_{\sigma }=0$ is well-defined.  If $\sigma $ were instead degenerate, then since $<X\setminus \epsilon (L)>\bigcap \epsilon (L)$ is a subsimplicial set, there would exist some non-degenerate $\sigma '\in <X\setminus \epsilon (L)>\bigcap \epsilon (L)\subset \epsilon (L)$ such that $s_{j_1}...s_{j_l}\sigma '=\sigma $, and so since $\phi $ agrees with the face and degneracy operators on $\epsilon (L)$, setting $\phi |_{\sigma }=0$ is well-defined.  Now suppose only that $\sigma \in <X\setminus \epsilon (L)>$.  Then if $\partial_i\sigma \notin \epsilon(L)$ (respectively $s_j\sigma \notin \epsilon(L)$) then $\partial_i(\phi |_{\sigma })=\phi |_{\partial_i\sigma}=0$ (resp. $s_j(\phi |_{\sigma })=\phi |_{s_j\sigma}=0$).  So supposing $\partial_i\sigma \in \epsilon(L)$ (resp. $s_j\sigma \in \epsilon(L)$), we have that $\partial_i\sigma \in <X\setminus \epsilon(L)>\bigcap \epsilon(L)$ (resp. $s_j\sigma \in <X\setminus \epsilon(L)>\bigcap \epsilon(L)$), and so by the above argument $\partial_i(\phi |_{\sigma })=\phi |_{\partial_i \sigma }=0$ (resp. $s_j(\phi |_{\sigma })=\phi |_{s_j\sigma }=0$).\\
Hence $\phi \in A^*X$ satisfies the required conditions.

\end{proof}
	\section{Two contravariant Mayer-Vietoris Sequences}

Any reasonable compactly supported cohomology theory should have Mayer-Vietoris sequences associated to it.  We give two ways of obtaining such sequences, one using \hyperref[bump_functions]{\ref*{bump_functions}} and imposing a condition on the intersection (version 1), and the other using a decomposition of our given simplicial set as a pushout, with various conditions on the maps (version 2).  In both cases we obtain a contravariant Mayer-Vietoris sequence.

\begin{defn}
Let $X$ be a simplicial set with two subsimplicial sets $U, V\subset X$ which cover $X$.  Then $U$ and $V$ are said to have $\emph{good intersection}$ if $\epsilon (<V\setminus U>)\subset V$.

\end{defn}

\begin{lma}
Let $X$ be a simplicial set with subsimplicial sets $U, V\subset X$ which cover $X$.  Then
\[ \epsilon (<V\setminus U>)\subset V\Leftrightarrow \epsilon (<U\setminus V>)\subset U\]
and hence the notion of a good intersection is symmetric.

\end{lma}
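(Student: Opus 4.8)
The plan is to unwind the definition of $\epsilon$ and show that the failure of either inclusion is witnessed by the same configuration of simplices, so that the two conditions are literally equivalent. First I would recall that, since $U$ and $V$ cover $X$, every simplex of $X$ lies in $U$ or in $V$ (or both), and that $<V\setminus U>$ consists of those simplices all of whose iterated faces lie outside $U$ — equivalently, simplices of $X$ not in $U$, together with whatever faces are forced. The key observation is the symmetry in the definition of good intersection once it is rephrased: $\epsilon(<V\setminus U>)\subset V$ fails precisely when there is a simplex $\sigma$ with some iterated face $\partial_{i_1}\cdots\partial_{i_m}\sigma$ in $<V\setminus U>$, yet $\sigma\notin V$; since the cover is by $U$ and $V$, the condition $\sigma\notin V$ forces $\sigma\in U$, and (after checking $\sigma\notin U$ would contradict the face lying in $<V\setminus U>$ via a simplicial-identity reordering argument, exactly as in the proof of \hyperref[bump_functions]{\ref*{bump_functions}}) one extracts a simplex of $<U\setminus V>$ having an iterated face outside $U$ — wait, this needs care, so let me restructure.

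The cleaner route: I would prove the contrapositive symmetrically. Suppose $\epsilon(<V\setminus U>)\not\subset V$. Then there is a non-degenerate $\sigma\in\epsilon(<V\setminus U>)$ with $\sigma\notin V$; since the cover is $\{U,V\}$, we get $\sigma\in U$. By definition of $\epsilon$, some iterated face $\tau:=\partial_{i_1}\cdots\partial_{i_m}\sigma$ lies in $<V\setminus U>$, so in particular $\tau\notin U$ (no iterated face of $\tau$ lies in $U$, as in the analysis in the bump-function proof), hence $\tau\in V$. Now I claim $\tau\in\epsilon(<U\setminus V>)$: the simplex $\sigma$ itself witnesses membership, since $\sigma\in U$, and one must check $\sigma$ — or a suitable non-degenerate simplex extracted from it by reordering degeneracies past faces (simplicial identities) — actually lies in $<U\setminus V>$, i.e. has no iterated face in $V$. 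This last point is the one requiring the same reordering trick used in Theorem~\ref*{bump_functions}: write any iterated face of $\sigma$ and push it down, using that $\sigma\notin V$ and $\sigma\in U$. Granting it, $\tau\in\epsilon(<U\setminus V>)$ but $\tau\notin U$, so $\epsilon(<U\setminus V>)\not\subset U$. By symmetry of the roles of $U$ and $V$ the converse implication is identical, giving the biconditional, and the final sentence about symmetry of "good intersection" is then immediate.

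The main obstacle I anticipate is the bookkeeping around degenerate simplices and the interaction of face/degeneracy operators: the set $<S>$ generated by a collection $S$ is genuinely larger than $S$, so "$\sigma$ has an iterated face in $<V\setminus U>$" must be translated into a statement about which iterated faces lie literally in $V\setminus U$ as a set of simplices, and the characterization "$\tau\in <V\setminus U>$ iff $\tau\notin U$ and no iterated face of $\tau$ is in $U$" needs to be pinned down (or its analogue for the generated subcomplex). I expect this to go through by the same simplicial-identity reordering argument already deployed in the proof of \hyperref[bump_functions]{\ref*{bump_functions}} — writing an arbitrary composite of faces and degeneracies in the normal form $s_{\cdots}\partial_{\cdots}$ — so I would either cite that argument or reproduce the one-line version. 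Everything else is a direct unwinding of definitions, and the symmetry of the final statement follows formally once one implication is established, since nothing in the argument privileged $U$ over $V$.
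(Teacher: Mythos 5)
There is a genuine gap at the step where you pass from $\tau\in <V\setminus U>$ to $\tau\notin U$. The subcomplex $<V\setminus U>$ is generated by the simplices of $V$ not in $U$ and is therefore closed under faces and degeneracies; a face of a simplex of $V\setminus U$ can perfectly well lie in $U$, so membership in $<V\setminus U>$ does not exclude membership in $U$. Concretely, let $X$ consist of two $1$-simplices $e_1=[0,1]$ and $e_2=[1,2]$ glued along the vertex $1$, with $U=<e_1>$ and $V=<e_2>$. Then $<V\setminus U>=<e_2>=V$ contains the vertex $1$, which lies in $U$. Running your argument with $\sigma=e_1$ (which is in $\epsilon(<V\setminus U>)$ but not in $V$) produces $\tau=\partial_0e_1=1\in U$, so $\tau$ is not a witness to $\epsilon(<U\setminus V>)\not\subset U$, even though that non-containment does hold here (witnessed by $e_2$). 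The same example refutes the characterisation you float, namely ``$\tau\in<V\setminus U>$ iff $\tau\notin U$ and no iterated face of $\tau$ is in $U$''.

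The repair, which is what the paper's proof does, is to trace one step further back. Since $\tau\in<V\setminus U>$, write $\tau=(Xg)\gamma$ for some generator $\gamma\in V\setminus U$ and some composite simplicial operator $Xg$. The correct witness is $\gamma$, not $\tau$: one has $\gamma\notin U$ by construction; and since $\sigma\in U\setminus V\subset<U\setminus V>$ and $<U\setminus V>$ is a subcomplex, $\tau=(Xg)\gamma\in<U\setminus V>$, so writing $Xg$ in the normal form $s_{l_1}\cdots s_{l_k}\partial_{t_1}\cdots\partial_{t_m}$ and applying $\partial_{l_k}\cdots\partial_{l_1}$ yields $\partial_{t_1}\cdots\partial_{t_m}\gamma\in<U\setminus V>$, whence $\gamma\in\epsilon(<U\setminus V>)$ and $\epsilon(<U\setminus V>)\not\subset U$. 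A secondary point: your opening move takes $\sigma\in\epsilon(<V\setminus U>)\setminus V$ and asserts that some iterated face of $\sigma$ lies in $<V\setminus U>$; this holds only for $\sigma$ in the generating set of $\epsilon(<V\setminus U>)$ (in the example above, the vertex $0$ lies in $\epsilon(<V\setminus U>)$ but has no iterated face in $<V\setminus U>$), and you need the minimality of $<->$ together with the fact that $V$ is a subcomplex, as the paper notes, to reduce to that case.
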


\begin{proof}

Suppose $\epsilon (<U\setminus V>)\not\subset U$.  Then by the minimality of $<->$,
\[ \{ \sigma \in X|\partial_{i_1}...\partial_{i_{m_\sigma}}\sigma \in <U\setminus V>\text{ for some $\{ i_j\}$ }\}\not\subset U\]
and hence there exists $\sigma \in X\setminus U$ such that $\partial_{i_1}...\partial_{i_s}\sigma \in <U\setminus V>$.  So since $U$ and $V$ cover $X$, $\sigma \in V$ and
\[ \partial_{i_1}...\partial_{i_s}\sigma =(Xg)\gamma \]
for some $g\in Mor(\Delta )$, and $\gamma \in U\setminus V$.  Now since $\sigma \in <V\setminus U>$, $(Xg)\gamma \in <V\setminus U>$.  Now $Xg$ is a composition of face and degeneracy operators, and using the simplicial identities, we can always write $Xg=s_{l_1}...s_{l_k}\partial_{t_1}...\partial_{t_m}$.  Hence $\partial_{l_k}...\partial_{l_1}(Xg)\gamma =\partial_{t_1}...\partial_{t_m}\gamma \in <V\setminus U>$, and hence $\gamma \in \epsilon (<V\setminus U>)$.  But $\gamma \notin V$, and hence $\epsilon (<V\setminus U>)\not\subset V$.  The converse follows by symmetry.

\end{proof}

\begin{thm} (Contravariant Mayer-Vietoris sequence, version 1)
Let $X$ be a simplicial set with subsimplicial sets $U, V\subset X$  which cover $X$ and have good intersection.  Then there is a long exact sequence
\[ ...\leftarrow HA_c^n(U\cap V)\leftarrow HA_c^nU\oplus HA_c^nV\leftarrow HA_c^nX\leftarrow HA_c^{n-1}(U\cap V)\leftarrow ...\]

\end{thm}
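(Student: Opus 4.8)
The plan is to produce a short exact sequence of cochain complexes
\[
0 \to A_c^*X \to A_c^*U \oplus A_c^*V \to A_c^*(U\cap V) \to 0
\]
and then take the associated long exact sequence in cohomology. The maps are the obvious ones built from restriction: the first sends $\Phi$ to $(\Phi|_U, \Phi|_V)$, and the second sends $(\alpha,\beta)$ to $\alpha|_{U\cap V} - \beta|_{U\cap V}$. The content is entirely in (i) checking these land in the compactly supported subcomplexes, (ii) exactness on the left and in the middle (both of which are essentially formal, as for ordinary $A^*$), and (iii) surjectivity on the right, which is where the bump function theorem \hyperref[bump_functions]{\ref*{bump_functions}} and the good intersection hypothesis enter.

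First I would check that restriction preserves compact support: if $\Phi \in A_c^qX$ vanishes on $\langle X\setminus K\rangle$ for finite $K$, then $\Phi|_U$ vanishes on $\langle U\setminus (K\cap U)\rangle$, and $K\cap U$ is finite, so $\Phi|_U \in A_c^qU$; similarly for $V$ and for the further restriction to $U\cap V$. Exactness at $A_c^*X$: if $\Phi|_U = 0$ and $\Phi|_V = 0$ then, since $U$ and $V$ cover $X$, every simplex of $X$ lies in $U$ or in $V$, so $\Phi = 0$. Exactness at $A_c^*U\oplus A_c^*V$: if $\alpha|_{U\cap V} = \beta|_{U\cap V}$ then $\alpha$ and $\beta$ glue to a well-defined $\Phi$ on all of $X$ (again because $U,V$ cover $X$, and they agree on the overlap), and $\Phi$ is compactly supported because if $\alpha$ vanishes outside finite $K_U \subset U$ and $\beta$ vanishes outside finite $K_V\subset V$, then $\Phi$ vanishes on $\langle X\setminus(K_U\cup K_V)\rangle$.

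The main obstacle is surjectivity of $A_c^qU \oplus A_c^qV \to A_c^q(U\cap V)$, i.e. extending a compactly supported form $\gamma$ on $U\cap V$. By the lemma on symmetry of good intersection we may use $\epsilon(\langle U\setminus V\rangle)\subset U$. The key point is that $\gamma$ is supported on a finite subcomplex $L \subset U\cap V$; I would apply \hyperref[bump_functions]{\ref*{bump_functions}} with this $L$ inside $K := \epsilon(L)$ — and here I must check $\epsilon(L) \subset U\cap V$, which should follow because $L$ is a finite subcomplex of $U\cap V$ and $U,V$ are full enough near it; more carefully, one uses that $L\subset \langle (U\cap V)\setminus \langle U\setminus V\rangle\rangle$ away from the boundary region, or simply shrinks $L$ appropriately — to get $\phi\in A^0(X)$ with $\phi|_L = 1$ and $\phi$ supported in $\epsilon(L)$. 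Then $\phi\cdot\gamma$ (extended by zero) is a compactly supported form on $U$ restricting to $\gamma$ near $L$; but $\gamma$ itself is zero outside $L$, so in fact $\phi\cdot\gamma$ agrees with $\gamma$ on all of $U\cap V$ once we note $\phi=1$ on $L\supseteq\mathrm{supp}(\gamma)$ and both sides vanish elsewhere. Taking $(\phi\gamma, 0)$ as the preimage then works, provided $\phi\gamma$ genuinely extends to a compactly supported form on $U$ — which it does since its support lies in $\epsilon(L)\cap U$, finite. The delicate bookkeeping is ensuring $\epsilon(L)$ stays inside the relevant subcomplexes so that extension-by-zero is simplicially well-defined; this is exactly the role of the good intersection hypothesis, and I would spell out that $\epsilon(L) \subset \epsilon(\langle U\setminus V\rangle)^c \cup (\text{stuff in } V)$ forces the support of $\phi\gamma$ to avoid the problematic simplices. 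Once the short exact sequence is established, the long exact sequence is the standard homological algebra consequence, with connecting maps $HA_c^{n-1}(U\cap V)\to HA_c^nX$.
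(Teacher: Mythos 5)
Your overall strategy --- the short exact sequence of complexes $0\to A_c^*X\to A_c^*U\oplus A_c^*V\to A_c^*(U\cap V)\to 0$, with the formal verifications that restriction preserves compact support, injectivity on the left, and gluing in the middle --- is exactly the paper's, and those parts are fine. The gap is in the surjectivity step. First, the containment $\epsilon(L)\subset U\cap V$ that you would need in order to apply Theorem \ref{bump_functions} with $K=\epsilon(L)$ is not implied by the good intersection hypothesis and is false in general: $\epsilon(L)$ is computed in the ambient $X$ and will typically contain simplices of $U\setminus V$ or $V\setminus U$ having a face in the support of $\gamma$; and ``shrinking $L$'' is not an option, since $L$ must contain the support of $\gamma$, which is given to you. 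Second, even granting such a $\phi$, your own observation that $\phi\gamma=\gamma$ on all of $U\cap V$ shows that multiplying by $\phi$ accomplishes nothing: your candidate preimage $(\phi\gamma,0)$ is just the extension of $\gamma$ by zero to $U$, and this need not be an element of $sSet(U,\nabla(*,q))$ at all. Finiteness of the support is not the issue; compatibility with the face operators is. If some $\sigma\in U\setminus V$ has a face $\partial_i\sigma$ lying in the support of $\gamma$, then assigning the value $0$ to $\sigma$ violates $\partial_i(\Phi|_{\sigma})=\Phi|_{\partial_i\sigma}$, and nothing in the good intersection hypothesis rules this out. Trying to push all of $\gamma$ onto the $U$ side cannot work in general.

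The repair is to use the bump function to build a partition of unity subordinate to $\{U,V\}$ rather than a cutoff near the support of $\gamma$. By symmetry of good intersection, $\epsilon(<U\setminus V>)\subset U$, so Theorem \ref{bump_functions} applied with $L=<U\setminus V>$ and $K=U$ yields $\phi\in A^0X$ with $\phi|_{<U\setminus V>}=1$ and $\phi|_{<X\setminus U>}=0$. Set $\phi_U=\phi$ and $\phi_V=1-\phi$. Then $\phi_V\gamma$ extends by zero to $U$: any face in $U\cap V$ of a simplex of $U\setminus V$ lies in $<U\setminus V>$, where $\phi_V$ vanishes, so the zero value on $U\setminus V$ is compatible with the face maps. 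Symmetrically, $\phi_U\gamma$ extends by zero to $V$. Both extensions are compactly supported because $\gamma$ is, and their restrictions to $U\cap V$ sum to $(\phi_U+\phi_V)\gamma=\gamma$. This is precisely how the paper's proof proceeds; the rest of your argument (exactness elsewhere and the passage to the long exact sequence) then goes through as you wrote it.
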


\begin{proof}

Denote the obvious inclusions by $\iota^U:U\cap V\to U$, $\iota^V$$:U\cap V\to V$, $j^U:U\to X$, $j^V:V\to X$.  For any inclusion $\iota :Y\to Z$ of simplicial sets, there is an induced map $\iota_*:A_c^*Z\to A_c^*Y$ given by restriction: indeed, all we must show is that such restrictions vanish on all but finitely many non-degenerate simplices of $Y$, which follows from that fact that a simplex of $Y$ is degenerate in $Y$ if and only if it is degenerate in $Z$.  We claim we have a short exact sequence

\[ 0\to A_c^kX\stackrel{\theta_1}{\to}A_c^kU\oplus A_c^kV\stackrel{\theta_2}{\to}A_c^k(U\cap V)\to 0\]
for all $k$.  Define $\theta_1 $ by $\theta_1(\omega )=(\j^U_*\omega,-\j^V_*\omega )$.  Define $\theta_2(\omega_1,\omega_2)=\iota^U_*\omega_1+\iota^V_*\omega_2$.  Now $\theta_1$ is injective, because $U$ and $V$ cover $X$. To show $\theta_2$ is surjective, let $\omega \in A_c^k(U\cap V)$.  By the good intersection hypothesis, and \hyperref[bump_functions]{\ref*{bump_functions}}, there exists some $\phi \in A^kX$ such that $\phi |_{<U\setminus V>}=1$ and $\phi |_{<X\setminus U>}=0$.  Let $\phi_U=\phi $ and $\phi_V=1-\phi $.  Then these two functions form a partition of unity subordinate to the cover $\{ U,V\} $, and so $\theta_2(\phi_V\omega |_{U},\phi_U\omega |_{V})=\omega $.  Exactness at the middle term follows easily from the fact that we can glue forms which agree on their intersection. Hence the sequence is exact for all $k$, and the long exact sequence now follows as standard.

\end{proof}
There are in fact alternative conditions under which we can deduce a similar result.  For this, we recall the definition of a proper map of simplicial sets.
\begin{defn}
A map $f:X\to Y$ of simplicial sets is $\emph{proper}$ if for any finite subsimplicial set $Z\subset Y$, the subsimplicial set $f^{-1}(Z)\subset X$ is finite.

\end{defn}
\begin{rmk}
It is easily seen that all inclusions of simplicial sets are proper, and that any map $f:X\to Y$ is proper if and only $f^{-1}(<\sigma >)$ contains only finitely many non-degenerate simplices, for each non-degenerate simplex $\sigma \in Y$.

\end{rmk}

\begin{thm} (Contravariant Mayer-Vietoris sequence, version 2)

Suppose we have a pushout diagram

\begin{center}
\begin{tikzpicture}
\node (W) {$W$};
\node (U) [right of =W] {$U$};
\node (V) [below of =W] {$V$};
\node (X) [right of =V] {$X$};
\draw[->] (W) to node {$f$} (U);
\draw[->] (W) to node {$\iota $} (V);
\draw[->] (U) to node {$h$} (X);
\draw[->] (V) to node {$g$} (X);

\end{tikzpicture}\\
\end{center}
of simplicial sets, where $\iota $ is an inclusion, $f$ is proper and $V$ is locally finite.  Then $g$ and $h$ are proper maps, and there exists a long exact sequence
\[ ...\leftarrow HA_c^n(W)\leftarrow HA_c^nU\oplus HA_c^nV\leftarrow HA_c^nX\leftarrow HA_c^{n-1}(W)\leftarrow ...\]
which is natural in all the variables in the pushout.

\end{thm}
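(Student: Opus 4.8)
The plan is to imitate the proof of version 1 as closely as possible: construct, for each $k$, a short exact sequence of cochain complexes
\[
0\longrightarrow A_c^k X\xrightarrow{\ \theta_1\ } A_c^k U\oplus A_c^k V\xrightarrow{\ \theta_2\ } A_c^k W\longrightarrow 0 ,
\]
with $\theta_1(\omega)=(h_*\omega,-g_*\omega)$ and $\theta_2(\omega_1,\omega_2)=f_*\omega_1+\iota_*\omega_2$, and then feed it to the zig-zag lemma. First I would record that $A_c^*$ is contravariant not just along inclusions but along all \emph{proper} maps $p:Y\to Z$, by $\Phi\mapsto \Phi\circ p$: if $\Phi$ vanishes on $<Z\setminus K>$ for a finite $K$, then $\Phi\circ p$ vanishes on $<Y\setminus p^{-1}(K)>$ and $p^{-1}(K)$ is finite, while the check that $\Phi\circ p$ genuinely lies in $A_c^*$ is the same face/degeneracy bookkeeping already used for inclusions. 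Thus all four maps above (and $\theta_1,\theta_2$) make sense once $g$ and $h$ are shown to be proper.

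\textbf{Step 1 (combinatorics of the pushout).} Since colimits of simplicial sets are computed levelwise and $\iota_n$ is injective, $X_n=U_n\sqcup\bigl(V_n\setminus \iota_n(W_n)\bigr)$, with $h_n$ the evident inclusion and $g_n$ the map sending $\iota_n(w)\mapsto f_n(w)$ and fixing $V_n\setminus\iota_n(W_n)$. From this elementary description I read off everything needed: $h$ is an inclusion; $h$ and $g$ jointly cover $X$; $g$ is injective on $V\setminus W$ and $g(W)\subseteq h(U)$; the square is also a pullback; and $g^{-1}(h(S))=f^{-1}(S)$ for every subcomplex $S\subseteq U$, so in particular $h(U)\cap g(V)=g(W)=h(f(W))$.

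\textbf{Step 2 ($g$ and $h$ are proper).} The map $h$ is an inclusion, hence proper. For $g$, by the remark above it suffices to make $g^{-1}(<\sigma>)$ finite for each non-degenerate $\sigma\in X$. I split $<\sigma>=(<\sigma>\cap h(U))\sqcup(<\sigma>\setminus h(U))$. The first part is $h(T)$ for some finite subcomplex $T\subseteq U$, so by Step 1 its $g$-preimage is $f^{-1}(T)$, which is finite because $f$ is proper. The second part is a finite set of simplices lying in $g(V\setminus W)$, where $g$ is injective, so its $g$-preimage is finite. (Local finiteness of $V$ is not used here.)

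\textbf{Step 3 (the short exact sequence).} One has $\theta_2\theta_1=0$ from $hf=g\iota$. Injectivity of $\theta_1$: if $\theta_1(\omega)=0$ then $\omega$ vanishes on $h(U)\cup g(V)=X$, so $\omega=0$. Surjectivity of $\theta_2$: given $\eta\in A_c^k W$ supported on a finite subcomplex $K\subseteq W\subseteq V$, use that $\iota_*:A^k V\to A^k W$ is onto — this is precisely the extension property \ref{extension_prop}, applied skeleton by skeleton exactly as in the proof of \ref{bump_functions} — to pick $\tilde\eta\in A^k V$ with $\iota_*\tilde\eta=\eta$; then, since $V$ is locally finite, $\epsilon(K)$ is a finite subcomplex, so \ref{bump_functions} supplies $\phi\in A^0(V)$ with $\phi|_K=1$ and $\phi|_{<V\setminus\epsilon(K)>}=0$, and $\omega_2:=\phi\tilde\eta\in A_c^k V$ satisfies $\iota_*\omega_2=\eta$ (as $\phi\equiv1$ on $K$ and $\eta\equiv0$ off $K$); hence $\theta_2(0,\omega_2)=\eta$. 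Exactness in the middle: given $(\omega_1,\omega_2)$ with $f_*\omega_1+\iota_*\omega_2=0$, define $\omega$ on simplices of $X$ by $\omega(h(u))=\omega_1(u)$ and $\omega(g(v))=-\omega_2(v)$; by Step 1 the two rules can only collide on $h(U)\cap g(V)=h(f(W))$, where $h(u)=g(v)$ forces $v\in W$ and $u=f(v)$, and then $\omega_1(f(v))=-\omega_2(v)$ is exactly the hypothesis, so $\omega$ is well defined; it is simplicial since $h,g$ are and jointly cover $X$, and it is compactly supported (being supported on the finite subcomplex $h(<K_1>)\cup g(<K_2>)$ if $\omega_i$ is supported on finite $K_i$). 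By construction $\theta_1(\omega)=(\omega_1,\omega_2)$.

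\textbf{Step 4 (conclusion and naturality).} The four restriction maps are morphisms of CDGAs, so $\theta_1,\theta_2$ commute with $d$, and the zig-zag lemma applied to the above short exact sequence of cochain complexes gives the long exact sequence. Naturality in the pushout variables is immediate from the manifest naturality of $\theta_1,\theta_2$ along proper morphisms of such pushout squares together with naturality of the connecting homomorphism.

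The part I expect to be the main obstacle is Step 1, or rather making sure it is used correctly everywhere: properness of $g$, the well-definedness of the glued form $\omega$, and its support estimate in Step 3 all rest on $h$ being an inclusion, the square being a pullback, $g$ collapsing only inside $W$, and $h(U)\cap g(V)=h(f(W))$. These facts are elementary for simplicial sets (instances of $sSet$ being an adhesive category), but keeping the bookkeeping straight is the real content; once it is in place, the bump function of \ref{bump_functions} together with local finiteness of $V$ handles the surjectivity of $\theta_2$ with no further trouble.
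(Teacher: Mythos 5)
Your proposal is correct and follows essentially the same route as the paper's own proof: identify $X$ with $U\coprod_W V$ to show $h$ is an inclusion and $g$ is proper (via $g^{-1}(h(S))=f^{-1}(S)$ plus injectivity of $g$ off $W$), then build the short exact sequence $0\to A_c^*X\to A_c^*U\oplus A_c^*V\to A_c^*W\to 0$, with surjectivity onto $A_c^*W$ obtained by extending a form to $V$ via the extension property and cutting it off with a bump function from \ref{bump_functions}, which is where local finiteness of $V$ enters. The only differences are cosmetic (sign conventions in $\theta_1,\theta_2$ and a slightly tidier bookkeeping of the pushout's levelwise structure).
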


\begin{proof}
Since an inclusion of simplicial sets is a cofibration in the Kan-Quillen model structure and cofibrations are preserved under pushouts, $h$ is an inclusion (and hence is proper).  To show $g$ is proper, we use the structure of the pushout of simplicial sets, that is, $X\cong U\coprod_WV$ naturally.  For the rest of this proof we will identify $X$ and $U\coprod_W V$.  Now let $K\subset X$ be a finite subsimplicial set, and $\sigma \in K$ any simplex.  Then suppose $\sigma =f(\tau)$, some $\tau \in W$.  Then claim that $g^{-1}(\sigma )=\iota (f^{-1}(\sigma ))$.  Indeed, since $h$ is an inclusion, $f(\iota^{-1}(g^{-1}(\sigma )))=\{ \sigma \} $, hence $\iota (W)\cap g^{-1}(\sigma )\subset \iota (f^{-1}(\sigma ))$, but since $\sigma =f(\tau )$, $g^{-1}(\sigma )\subset \iota (W)$, hence $g^{-1}(\sigma )\subset \iota (f^{-1}(\sigma ))$.  For the reverse direction, observe that by commutativity of the pushout, $g\iota (f^{-1}(\sigma )={\sigma }$, and hence $\iota (f^{-1}(\sigma ))\subset g^{-1}(\sigma )$, and so we have the claimed equality.  This equality extends to show that $\iota (f^{-1}(K\cap U))=g^{-1}(K\cap U)$, and hence, $\iota (f^{-1}(K\cap U))\cup g^{-1}(K\setminus U)=g^{-1}(K)$.  But $g$ is injective on $g^{-1}(X\setminus U)$, and hence $g^{-1}(K\setminus U)$ has only finitely many non-degenerate simplices (non-degenerate in $V$), as does $\iota (f^{-1}(K\cap U))$, because $f$ is proper, and hence $g$ is proper.

\vspace{3mm}

We now prove the existence of the stated long exact sequence.  Since all the maps in the pushout diagram are proper, they all induce maps on $A_c(-)$ in the opposite direction.  By the proof of $14.1$ of \cite{bousgug}, there exists a short exact sequence of complexes
\[ 0\to AX\stackrel{(Ah,Ag)}{\to}AU\oplus AV\stackrel{A\iota -Af}{\to}AW\to 0\]
We also have the sequence
\[ 0\to A_cX\stackrel{(Ah,Ag)}{\to}A_cU\oplus A_cV\stackrel{A\iota -Af}{\to}A_cW\to 0\]
which we claim is exact also.  Notice that the maps in the second sequence are well-defined by properness.  $(Ah,Ag)$ is injective in the second sequence, since it is just a restriction of the map in the first sequence.  Now
\[ ker(A^n\iota -A^nf)=\{ (\Phi ,\Theta)\in A_c^n\oplus A_c^nV|A^nf(\Phi )=A^n\iota (\Theta )\} \]
so given any $(\Phi ,\Theta)\in ker(A^n\iota -A^nf)$, define $\Psi \in A^nX$ by
\begin{equation}
   \Psi |_{\sigma }=
    \begin{cases}
      \Phi |_{\sigma }, \text{if } \sigma \in U \\
      \Theta |_{\sigma }, \text{if } \sigma \in V
    \end{cases}
\nonumber
  \end{equation}
To show this is well defined, if $\sigma =f(\tau)\in U$, then
\[ \Theta |_{\tau }=A^nf(\Phi )|_{\tau}=\Phi |_{f(\tau )}=\Phi |_{\sigma }\]
as required.  So $(Ah,Ag)(\Psi )=(\Phi ,\Theta)$, and hence the sequence is exact at the middle term.  To show that $A^n\iota -A^nf$ is surjective in the second sequence, it suffices to show that $A^n\iota :A_c^nU\to A_c^nW$ is surjective (since $A_c^nU$ is naturally contained in $A_c^nU\oplus A_c^nV$).  Indeed, suppose $\omega \in A_{c}^nW$.  Then by the \hyperref[extension_prop]{extension property \ref*{extension_prop}} for $A$, there exists some extension $\omega_0\in A^nV$.  Now since $V$ is locally finite and $supp(\omega )$ is finite, $\epsilon(supp(\omega ))$ is also finite, and so by \hyperref[bump_functions]{\ref*{bump_functions}} there exists a bump function $\psi \in A^0(V)$ with $\psi |_{supp(\omega )}=1$ and $\psi |_{<V\setminus \epsilon (supp(\omega ))>}=0$, and hence $\psi \omega_0\in A_c^nV$ is an extension of $\omega $ as required.

\end{proof}
	\section{The PL compactly supported de Rham Theorem}

\begin{defn}
For a simplicial set $X$ with subsimplicial set $A$, we define the $\emph{relative polynomial de Rham complex}$ $A^*(X,A)$ by
\[ A^q(X,A)=\{ \Phi \in sSet(X,\nabla (*,q))| \Phi |_A=0\} \]

\end{defn}
In order to prove a de Rham theorem, we will need to use a model of singular cohomology which behaves well with respect to integration, which means we really don't want to have to think about degenerate simplices.  Thus we will use the $\emph{normalized}$ (or $\emph{Moore}$) complex of a simplicial Abelian group.  We will quickly say precisely what this is.

\begin{defn}
For a simplicial set $X$, the \emph{chain complex} $C_*X$ of $X$ is defined by
\[ C_nX=k[X_n]\]
with differential given by the alternating sum of the face maps
\[ \Sigma_{i=0}^n(-1)^i\partial_i:C_nX\to C_{n-1}X\]
$NC_*X$ will (temporarily) denote the $\emph{normalised chain complex}$ of $X$.  This is defined by
\[ NC_nX=k[X_n]/D(k[X_n])\]
where $D(Y_n)$ for a simplicial group $Y$ denotes the subgroup of $Y_n$ generated by the degenerate simplices.  The differential is induced by the differential on $C_*X$ (it is standard that this is well defined on the quotients by the degenerate simplices).\\
We define the corresponding (normalised) cochain complex as the dual over $k$ of the (normalised) chain complex, and for $A\subset X$, we define $C^*(X,A)$ to be the subobject of $C^*X$ of cochains which vanish on all simplices of $A$, and similarly for $NC^*(X,A)$.

\end{defn}

\begin{prop} (Eilenberg, Mac Lane.  Appears as Thm. $2.4$ in chap. III of \cite{goerssjardine})\\
For a simplicial set $X$ there is a natural inclusion $NC_*X\to C_*X$, which is a chain homotopy equivalence.  Hence by dualising, there is a natural chain homotopy equivalence $C^*X\to NC^*X$.

\end{prop}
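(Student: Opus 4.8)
The plan is to realise the normalised chain complex as a \emph{subcomplex} of $C_*X$ rather than as a quotient, and then to split off the degenerate part as a contractible direct summand. Write
\[ N_nX:=\bigcap_{i=1}^{n}\ker\bigl(\partial_i\colon C_nX\to C_{n-1}X\bigr)\subseteq C_nX,\qquad D_nX:=\sum_{j=0}^{n-1}\operatorname{im}\bigl(s_j\colon C_{n-1}X\to C_nX\bigr), \]
so that $NC_nX=C_nX/D_nX$ by definition. The simplicial identities show that the total differential $\partial=\sum_i(-1)^i\partial_i$ restricts on $N_nX$ to $\partial_0$ and carries $N_nX$ into $N_{n-1}X$, so $(N_*X,\partial_0)$ is a subcomplex of $(C_*X,\partial)$; likewise $D_*X$ is a subcomplex, since the two identity-valued face terms $\partial_j s_j$ and $\partial_{j+1}s_j$ of $\partial\circ s_j$ cancel. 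First I would establish the classical Eilenberg--Mac Lane decomposition $C_*X=N_*X\oplus D_*X$, naturally in $X$, by a double induction on the simplicial degree $n$ and on the degeneracy filtration $D^{(p)}_nX:=\sum_{j\le p}\operatorname{im}(s_j)$: one builds a natural retraction $C_nX\twoheadrightarrow N_nX$ whose kernel is exactly $D_nX$, and checks $N_nX\cap D_nX=0$ using $s_is_j=s_{j+1}s_i$ for $i\le j$. Granting this, the composite $N_*X\hookrightarrow C_*X\twoheadrightarrow C_*X/D_*X=NC_*X$ is an isomorphism of chain complexes, and it is this isomorphism, read backwards, that realises the asserted natural inclusion $NC_*X\to C_*X$.

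It then remains to prove that $N_*X\hookrightarrow C_*X$ is a chain homotopy equivalence; in view of the splitting this amounts to showing that the degenerate subcomplex $D_*X$ is \emph{chain contractible}. I would construct a natural contracting homotopy $h\colon D_nX\to D_{n+1}X$ inductively along the filtration $D^{(p)}_nX$, setting $h$ on an element of the form $s_jx$ equal to $s_{j+1}x$ plus a correction term of strictly lower filtration, so that the induction closes and $\partial h+h\partial=\mathrm{id}$ on $D_*X$; every choice is made uniformly in terms of the face and degeneracy operators, hence is natural in $X$. Then the split short exact sequence $0\to D_*X\to C_*X\to NC_*X\to 0$ has contractible kernel, so $C_*X\to NC_*X$ and its section $NC_*X=N_*X\hookrightarrow C_*X$ are natural chain homotopy equivalences. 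Dualising over $k$ takes a chain homotopy equivalence to a chain homotopy equivalence --- apply $\operatorname{Hom}_k(-,k)$ to the homotopy inverse and to the homotopies --- which gives the desired natural chain homotopy equivalence $C^*X\to NC^*X$; the same splitting, being natural, restricts along $A\hookrightarrow X$ to handle the relative complexes $C^*(X,A)$ and $NC^*(X,A)$ introduced above, and in any case, $k$ being a field, every quasi-isomorphism of $k$-cochain complexes is automatically a chain homotopy equivalence.

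The step I expect to be the real obstacle is the combinatorial bookkeeping in those two inductions --- producing the natural retraction onto $N_*X$ and, above all, the contracting homotopy on $D_*X$ with correction terms that genuinely drop in the degeneracy filtration --- which requires repeated careful use of $s_is_j=s_{j+1}s_i$ together with the mixed identities for $\partial_i s_j$. If this became unwieldy I would fall back on the method of acyclic models, comparing the functors $C_*$ and $NC_*$ on the representables $\{\Delta^n\}_{n\ge0}$: since $\Delta^n$ is contractible, $C_*\Delta^n$ and $NC_*\Delta^n$ are each chain homotopy equivalent to $k$ concentrated in degree $0$, and the standard acyclic-models comparison theorem then upgrades the $H_0$-isomorphism $NC_*\hookrightarrow C_*$ to a natural chain homotopy equivalence, after which dualising concludes as above. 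This is essentially the route of the cited Theorem $2.4$ of \cite{goerssjardine}.
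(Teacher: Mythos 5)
The paper gives no proof of this proposition --- it is stated as a classical result of Eilenberg and Mac Lane with a citation to Theorem 2.4 of Chapter III of \cite{goerssjardine} --- and your argument is precisely the standard normalization-theorem proof found there: split $C_*X$ naturally as $N_*X\oplus D_*X$, contract the degenerate summand along the degeneracy filtration, and dualise. Your outline is correct (and you rightly flag the contracting homotopy on $D_*X$ as the only delicate combinatorial step, with acyclic models as a legitimate fallback), so this matches the intended source rather than diverging from anything in the paper.
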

This model allows Bousfield and Gugenheim to construct the de Rham natural transformation $A^*\to C^*$ using the integration we defined earlier.  We will repeat this now.

\vspace{3mm}

Write $\rho :A^*\to C^*$ for the natural transformation given by
\[ <\rho \omega ,\sigma >=\int \omega |_{\sigma }\]
where $\omega \in A^qX$ and $\sigma \in X_q$, for any simplicial set $X$.

\vspace{3mm}

Observe that if $\sigma $ is degenerate, then $\sigma $$=$$s_j\sigma '$, for some $\sigma '$, and $\omega |_{\sigma}$$=$$s_j\omega |_{\sigma '}$$=$$0$, because $\omega |_{\sigma '}\in \nabla (q-1,q)=0$.  Hence $\rho_X(\omega )$ vanishes on degenerate simplices, and so $\rho $ in fact maps into $NC^*X$.  It is easy to check that $\rho $ is a well defined natural transmormation.\\
We now have the PL de Rham theorem

\begin{thm} (2.2 and 3.4 of \cite{bousgug})\\
$\rho $ induces a multiplicative homology isomorphism
\[ \rho _*HA^*X\to HNC^*X\simeq HC^*X\]
for any simplicial set $X$.

\end{thm}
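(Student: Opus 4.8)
The plan is to reconstruct the Bousfield--Gugenheim argument, whose skeleton is: (i) verify the assertion when $X$ is a standard simplex; (ii) bootstrap to arbitrary \emph{finite} simplicial sets by induction over nondegenerate simplices, using a Mayer--Vietoris comparison and the five lemma; (iii) pass to arbitrary $X$ by a $\varprojlim^1$-vanishing argument over the filtered system of finite subsimplicial sets; (iv) upgrade the additive isomorphism to a multiplicative one and invoke the Eilenberg--Mac Lane comparison $HNC^*X\cong HC^*X$ already quoted. For the base case (i): by Yoneda $A^q\Delta^n = sSet(\Delta^n,\nabla(*,q)) = \nabla(n,q)$, so $HA^*\Delta^n = H\nabla(n,*)$, which is $k$ concentrated in degree $0$ by the Poincar\'e lemma for $\nabla(n,*)$; on the other side $NC^*\Delta^n$ has cohomology $k$ in degree $0$ and nothing else, since $|\Delta^n|$ is contractible. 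In degree $0$ the constant form $1\in A^0\Delta^n$ maps under $\rho$ (using \ref{int_defn}, with $\int_{|\Delta^0|}1 = 1$) to the cocycle taking the value $1$ on every vertex, which generates $HNC^0\Delta^n$; hence $\rho$ is an isomorphism on $\Delta^n$ for every $n$.

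For (ii), filter a finite $X$ first by dimension and, within each fixed dimension, by the number of nondegenerate top-dimensional simplices, so that each inductive step is a single pushout $X' = X\amalg_{\partial\Delta^d}\Delta^d$ attaching one $d$-simplex along its boundary. Exactly as in Proposition $14.1$ of \cite{bousgug}, the extension property \ref{extension_prop} makes the restriction $A^*\Delta^d \to A^*\partial\Delta^d$ surjective and produces a short exact sequence of complexes
\[ 0\to A^*X'\to A^*X\oplus A^*\Delta^d\to A^*\partial\Delta^d\to 0 ; \]
the analogous sequence for $NC^*$ is immediate, the restriction $NC^*\Delta^d\to NC^*\partial\Delta^d$ being split surjective because $NC_*\partial\Delta^d\hookrightarrow NC_*\Delta^d$ is a split inclusion of free $k$-modules. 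Naturality of $\rho$ gives a morphism between the two long exact sequences in cohomology; since $\rho$ is already known to be an isomorphism on $\Delta^d$ (base case), on $\partial\Delta^d$ (which has dimension $d-1$, hence is covered by the outer induction on dimension), and on $X$ (one fewer top-dimensional simplex, inner induction), the five lemma yields the isomorphism for $X'$, completing the finite case.

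For (iii), an arbitrary simplicial set $X$ is the filtered union of its finite subsimplicial sets $X_\alpha$, and one has $A^*X = \varprojlim A^*X_\alpha$ and $NC^*X = \varprojlim NC^*X_\alpha$ compatibly with $\rho$. The transition maps of both towers are surjective --- for $A^*$ by the extension property (extend a given form over a finite subcomplex one nondegenerate simplex at a time, as in \ref{bump_functions}), and for $NC^*$ because the normalized chains of a subcomplex split off as a direct summand --- so both towers are Mittag-Leffler, the relevant $\varprojlim^1$ terms vanish, and cohomology commutes with the inverse limit. Hence $\rho_\ast\colon HA^*X\to HNC^*X$ is an isomorphism for every $X$.

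For (iv), $\rho$ is not a morphism of CDGAs on the nose (the wedge of polynomial forms does not integrate to the cup product of cochains), but the two natural cochain operations $\omega\otimes\eta\mapsto \rho(\omega\wedge\eta)$ and $\omega\otimes\eta\mapsto \rho(\omega)\cup\rho(\eta)$ are naturally chain homotopic by an acyclic-models argument, both bifunctors being acyclic on representables by the computation in (i); consequently $\rho_\ast$ is a ring homomorphism on cohomology. The main obstacle is really concentrated in the input to step (ii): extracting the Mayer--Vietoris short exact sequence for $A^*$, i.e.\ deducing surjectivity of the boundary-restriction map from \ref{extension_prop} (and the parallel, but routine, bookkeeping needed to see that the finite subcomplex tower is Mittag-Leffler in step (iii)). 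Once those are in place the remainder is formal diagram-chasing and the cited Eilenberg--Mac Lane equivalence.
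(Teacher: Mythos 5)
The paper gives no proof of this statement: it is imported verbatim from Bousfield--Gugenheim \cite{bousgug} (their 2.2 for the additive isomorphism, 3.4 for multiplicativity), so your reconstruction is being measured against their argument rather than anything in this text. Steps (i), (ii) and (iv) do recover that argument faithfully: Yoneda plus the Poincar\'e lemma on the models $\Delta^n$, cell-attachment induction in which the extension property \ref{extension_prop} supplies the surjectivity needed for the short exact Mayer--Vietoris sequences, and an acyclic-models comparison of $\rho(\omega\wedge\eta)$ with $\rho(\omega)\cup\rho(\eta)$ for the ring structure.

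The genuine gap is in step (iii). Mittag--Leffler conditions and the Milnor ${\varprojlim}^1$ sequence are statements about \emph{towers}, i.e.\ inverse systems indexed by $\mathbb{N}$; the poset of finite subsimplicial sets of an arbitrary $X$ is filtered but in general uncountable, and surjectivity of the transition maps does not force the higher derived limits over such a poset to vanish, nor is there a two-term Milnor sequence to invoke there. Moreover, even for an honest tower of surjections, the obstruction to ``cohomology commutes with the inverse limit'' is ${\varprojlim}^1$ of the \emph{cohomology} groups $H^{n-1}(C_k)$, not of the cochain groups, and that need not vanish; the correct move is not to make it vanish but to map the Milnor sequence for $A^*$ to the one for $NC^*$ via $\rho$ and apply the five lemma, which only needs $\rho$ to be an isomorphism at each finite stage. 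Bousfield and Gugenheim sidestep the uncountability problem by filtering by \emph{skeleta}: $X=\mathrm{colim}_n\, sk_nX$ is a countable tower of inclusions, each $sk_nX$ is obtained from $sk_{n-1}X$ by attaching a possibly infinite coproduct of $n$-cells, and both $A^*$ and $NC^*$ convert coproducts into products (with cohomology commuting with arbitrary products of cochain complexes of $k$-modules), so the finite case of your step (ii) extends to all skeleta before the tower argument is run. With step (iii) repaired along these lines the proof is complete.
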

An easy corollary is

\begin{cor}
For any pair of simplicial sets $(X,A)$ with $A\subset X$, the chain map $\rho_{(X,A)}:A^*(X,A)\to NC^*(X,A)$, defined by restricting $\rho_X$, induces a multiplicative homology isomorphism
\[ \rho _*:HA^*(X,A)\to HNC^*(X,A)\cong HC^*(X,A)\]

\end{cor}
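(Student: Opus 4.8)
The plan is to deduce this from the absolute PL de Rham theorem by a five lemma argument on the long exact sequences of the pair. First I would record the short exact sequence of cochain complexes
\[ 0\to A^*(X,A)\to A^*X\stackrel{r}{\to}A^*A\to 0, \]
where $r$ is restriction along the inclusion $A\hookrightarrow X$. Exactness on the left and in the middle is immediate, since $A^*(X,A)$ is by definition the kernel of $r$. The only nontrivial point is surjectivity of $r$: given $\omega\in A^qA$ one extends it over $X$ by induction on the non-degenerate simplices of $X$ not lying in $A$, ordered by dimension; at a non-degenerate $n$-simplex $\sigma$ the forms already chosen on its faces satisfy the compatibility relations $\partial_i w_j=\partial_{j-1}w_i$, so the \hyperref[extension_prop]{extension property \ref*{extension_prop}} produces a form on $\sigma$, and one then propagates the definition to degenerate simplices via the degeneracy operators. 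This is precisely the argument used in the proof of $14.1$ of \cite{bousgug}.

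Next I would note the analogous short exact sequence
\[ 0\to NC^*(X,A)\to NC^*X\to NC^*A\to 0, \]
which is the $k$-dual of the split short exact sequence of normalised chain complexes $0\to NC_*A\to NC_*X\to NC_*X/NC_*A\to 0$; the splitting exists because a simplex of $A$ is degenerate in $A$ if and only if it is degenerate in $X$, so $NC_*A$ has as basis a subset of the basis of non-degenerate simplices of $NC_*X$ and is therefore a direct summand in each degree. Since $\rho$ is a natural transformation $A^*\to NC^*$ of functors on simplicial sets, it carries the first short exact sequence to the second, so that $\rho_{(X,A)}$ together with $\rho_X$ and $\rho_A$ form a morphism of short exact sequences of complexes. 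Passing to cohomology yields a morphism of the two long exact sequences of the pair, in which the maps induced by $\rho_X$ and $\rho_A$ are isomorphisms by the PL de Rham theorem; the five lemma then gives that $\rho_{(X,A)*}:HA^*(X,A)\to HNC^*(X,A)$ is an isomorphism.

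Finally, the identification $HNC^*(X,A)\cong HC^*(X,A)$ follows the same pattern: the Eilenberg--Mac Lane equivalence $C^*X\to NC^*X$ is natural, hence compatible with restriction to $A$, hence restricts to a map of the corresponding short exact sequences, and a further application of the five lemma (using that $C^*X\to NC^*X$ and $C^*A\to NC^*A$ are quasi-isomorphisms) gives the claimed isomorphism. Multiplicativity is inherited for free: $A^*(X,A)$, $NC^*(X,A)$ and $C^*(X,A)$ are ideals in the respective absolute cochain algebras, the absolute maps $\rho$ and the Eilenberg--Mac Lane map are multiplicative, and the products on the relative cohomologies are the restrictions of the absolute ones, so the isomorphisms above respect them. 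The step I expect to require the most care is the surjectivity of $r:A^*X\to A^*A$ — the skeletal induction with the extension property, and in particular checking that the chosen values remain consistent with all the face and degeneracy identities once the degenerate simplices are filled in; everything else is a formal diagram chase.
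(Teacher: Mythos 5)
Your main argument is the same as the paper's: both proofs form the morphism of short exact sequences
\[ 0\to A^*(X,A)\to A^*X\to A^*A\to 0 \qquad\text{and}\qquad 0\to NC^*(X,A)\to NC^*X\to NC^*A\to 0, \]
pass to the two long exact sequences of the pair, and apply the five lemma using the absolute de Rham theorem for $X$ and for $A$. Your extra care about surjectivity of the restriction $A^*X\to A^*A$ (via the extension property) and about the splitting of the normalised chain sequence is correct and is detail the paper leaves implicit.

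However, your treatment of multiplicativity has a genuine gap. You assert that ``the absolute maps $\rho$ and the Eilenberg--Mac Lane map are multiplicative'' and that the relative products are restrictions of the absolute ones. The problem is that $\rho$ is \emph{not} multiplicative at the cochain level: it cannot be, since $A^*X$ is graded-commutative while the cup product on $NC^*X$ is not; the content of Theorem $3.4$ of \cite{bousgug} is precisely that $\rho$ is multiplicative only after passing to cohomology. So the restriction of $\rho$ to the relative subcomplexes does not inherit chain-level multiplicativity, because there is none to inherit. If you instead try to run the argument on cohomology, you can only conclude that $\rho_*(ab)$ and $\rho_*(a)\rho_*(b)$ have the same image under $HC^*(X,A)\to HC^*X$, and this map is not injective in general, so you cannot conclude they are equal in $HC^*(X,A)$. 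The paper avoids this by factoring the relative isomorphism as
\[ HA^*(X,A)\to HA^*(X/A,*)\to HC^*(X/A,*)\to HC^*(X,A), \]
where the outer maps are induced by canonical multiplicative chain maps and the middle map is multiplicative by the \emph{absolute} de Rham theorem applied to $X/A$; this reduces relative multiplicativity to a case where the cohomology-level multiplicativity statement is already available. You should either adopt that reduction or give some other genuine argument for the product structure; the rest of your proof stands.
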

\begin{proof}
We have the following diagram of chain complexes, with exact rows

\begin{tikzpicture}
\node at (-3,0) (01) {$0$};
\node at (0,0) (AXA) {$A^*(X,A)$};
\node at (3,0) (AX) {$A^*X$};
\node at (6,0)(AA) {$A^*A$};
\node at (9,0) (02) {$0$};
\node at (-3,-3) (03) {$0$};
\node at (0,-3) (CXA) {$NC^*(X,A)$};
\node at (3,-3) (CX) {$NC^*X$};
\node at (6,-3) (CA) {$NC^*A$};
\node at (9,-3) (04) {$0$};
\draw[->] (01) to node {} (AXA);
\draw[->] (AXA) to node {} (AX);
\draw[->] (AX) to node {} (AA);
\draw[->] (AA) to node {} (02);
\draw[->] (03) to node {} (CXA);
\draw[->] (CXA) to node {} (CX);
\draw[->] (CX) to node {} (CA);
\draw[->] (CA) to node {} (04);
\draw[->] (AXA) to node {$\rho_{(X,A)}$} (CXA);
\draw[->] (AX) to node {$\rho_X$} (CX);
\draw[->] (AA) to node {$\rho_A$} (CA);

\end{tikzpicture}\\
and so the vertical maps given by $\rho $ induce a map of long exact homology sequences, and $\rho_X$ and $\rho_A$ induce homology isomorphisms by the de Rham theorem (2.2 of \cite{bousgug}), and so by the five lemma, $\rho_{(X,A)}$ also induces a homology isomorphism.

\vspace{3mm}

To see that the isomorphism is multiplicative, observe that it factors as the composition
\[ HA^*(X,A)\to HA^*(X/A,*)\to HC^*(X/A,*)\to HC^*(X,A)\]
where the first and last maps are the isomorphisms by the induced canonical multiplicative maps on the level of chains.  The fact that the middle map is multiplicative follows from the non-compactly supported de-Rham theorem, and the fact that the reduced homologies of $X/A$ are canonically isomorphic to the respective homologies, except in degree $0$ where they are both $0$.

\end{proof}
From now on we will identify $NC^*$ and $C^*$, and their compactly supported versions also.

\vspace{3mm}

We now show that $A_c^*$ has an alternative construction as a direct limit of relative cohomology groups (analagous to the well known construction of $C_cX$ in this way).

\vspace{3mm}

Let $X$ be a simplicial set.  Observe that the collection of finite simplicial subsets of $X$ forms a directed system, since it is closed under finite unions.  Moreover, if $L\subset K\subset X$ are finite simplicial subsets, we get an evident induced map $HA^*(X,<X\setminus L>)\to HA^*(X,<X\setminus K>)$, and so the collection $\{ HA^*(X,<X\setminus K>)| K\subset X\text{ is finite}\} $ is naturally a directed system also, and so we can form the direct limit
\[ colim_K(HA^*(X,<X\setminus K>))\]
(or indeed we can form the direct limit of the $A^*(X,<X\setminus K>)$)
as $K$ ranges over all finite subsimplicial sets of $X$.  We note as well that we consider this as a colimit in the category of graded Abelian groups, or graded rings, and the underlying complex will be the same.  We now have the following results

\begin{lma}
\label{relative}
The natural map
\[ \eta :HA_c^*X\to colim_K(HA^*(X,<X\setminus K>))\]
is an isomorphism of graded rings.

\end{lma}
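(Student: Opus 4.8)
The plan is to establish the isomorphism already at the level of cochain complexes, and then deduce the homological statement from the exactness of filtered colimits; no input from the de Rham theorem or from the bump function constructions is needed for this lemma.

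First I would record the elementary observation that for finite simplicial subsets $L\subseteq K$ of $X$ one has $X\setminus K\subseteq X\setminus L$, hence $<X\setminus K>\subseteq <X\setminus L>$, so the structure map $A^*(X,<X\setminus L>)\to A^*(X,<X\setminus K>)$ of the directed system is simply the inclusion of subcomplexes of $A^*X$. Consequently $colim_K A^*(X,<X\setminus K>)$ is the increasing union $\bigcup_K A^*(X,<X\setminus K>)$ inside $A^*X$, the colimit being filtered because the finite simplicial subsets of $X$ are closed under finite unions. I would then check that this union is exactly $A_c^*X$: any $\Phi\in A^q(X,<X\setminus K>)$ with $K$ finite is compactly supported by the very definition of $A_c^*$, and conversely any $\Phi\in A_c^qX$ vanishes on $<X\setminus K>$ for some finite $K$, i.e.\ lies in $A^q(X,<X\setminus K>)$. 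Since the differential on each $A^*(X,<X\setminus K>)$ and the wedge products are restrictions of those on $A^*X$ (each $A^*(X,<X\setminus K>)$ being a differential ideal in $A^*X$), this identification is an isomorphism of (non-unital) differential graded rings
\[ A_c^*X \;=\; colim_K\, A^*(X,<X\setminus K>). \]

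Next I would take homology. Because filtered colimits of modules are exact, $H^*(-)$ commutes with the filtered colimit above, so the canonical comparison map $colim_K HA^*(X,<X\setminus K>)\to HA_c^*X$ is an isomorphism of graded abelian groups, and the map $\eta$ of the statement is its inverse: concretely, a compactly supported cocycle $\omega$ lies in $A^*(X,<X\setminus K>)$ for some finite $K$, is a cocycle there since the differential is a restriction, and $\eta[\omega]$ is the image of its class in the colimit — this is independent of $K$ and of the representative because any two of the complexes $A^*(X,<X\setminus K>)$ are contained in a third, namely the one indexed by the union. For multiplicativity, the product on $colim_K HA^*(X,<X\setminus K>)$ is induced from the wedge products on the terms, and each inclusion $A^*(X,<X\setminus K>)\hookrightarrow A_c^*X$ is a ring map — if $\Phi$ vanishes on $<X\setminus K>$ then so does $\Phi\wedge\Psi$ for any $\Psi$ — so the product transported along $\eta$ from $HA_c^*X$ agrees with the colimit product, and $\eta$ is an isomorphism of graded rings.

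The only point requiring genuine care, and the step I expect to be the crux, is the cochain-level identification $colim_K A^*(X,<X\setminus K>)=A_c^*X$: it rests on the structure maps of the directed system being honest inclusions and on the union of the subcomplexes being precisely the compactly supported forms, neither more nor less. Once that is pinned down, the remainder is the standard interchange of homology with filtered colimits together with routine bookkeeping of the differential and the product.
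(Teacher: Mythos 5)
Your proposal is correct and follows essentially the same route as the paper: both identify $A_c^*X$ with the filtered colimit (in fact the increasing union) of the subcomplexes $A^*(X,<X\setminus K>)$ at the cochain level and then invoke the commutation of homology with filtered colimits, with multiplicativity coming from the product on the colimit being induced by the inclusions. Your write-up is somewhat more explicit than the paper's (which leaves the cochain-level identification and the injectivity/surjectivity check largely implicit), but there is no difference in substance.
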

\begin{proof}

$\eta $ is defined as follows.  Given $\omega \in A_c^qX$, there exists some finite $K\subset X$ such that $\omega |_K=0$, and so $\omega $ naturally belongs to some
\[ A^*(X,<X\setminus K>)\]
and is also a cocycle in this complex.  Hence $\omega $ naturally represents some element of the direct limit in the lemma, which is what we define $\eta (\omega )$ to be.  To show $\eta $ is well defined on the level of homology, suppose $\omega =\omega '+d\gamma $, for some $\gamma \in A_c^{q-1}X$.  Then $\omega $ and $\omega '$ are both cocycles in some common $A^*(X,<X\setminus K>)$, and $K$ can also be chosen so that $\gamma $ belongs to the complex, and it is clear that $\omega =\omega '+d\gamma $ in $A^*(X,<X\setminus K>)$ also, and hence $\omega $ and $\omega '$ are cohomologous in $colim_L(A^*(X,<X\setminus L>))$, and so since homology commutes with direct limits, they represent the same element of the direct limit in the lemma, hence $\eta $ is well defined.\\
Showing that $\eta $ is injective and surjective just amounts to noticing that any cochain in $\omega \in A_c^*X$ is closed if and only if its image in
\[ colim_K(A^*(X,<X\setminus K>))\]
is closed (and again using that homology commutes with direct limits).  The fact that it is multiplicative follows easily from how the multiplication on the colimit is defined.

\end{proof}
It is standard that the same two lemmas hold with $A^*$ replaced by $C^*$.  Hence we now have

\begin{thm}(PL compactly supported de Rham Theorem)\\
The restriction $\rho_c:A_c^*X\to C_c^*X$ induces a multiplicative isomorphism on cohomology.

\end{thm}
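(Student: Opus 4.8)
The plan is to deduce the theorem formally from the relative PL de Rham isomorphism (the Corollary above) together with Lemma~\ref{relative}, by passing to the colimit over finite subsimplicial sets. First I would check that $\rho_c$ really is a morphism $A_c^*X\to C_c^*X$, namely the restriction of $\rho_X$: if $\omega\in A_c^qX$ with $\omega|_{<X\setminus K>}=0$ for some finite $K\subset X$, then $<\rho_X\omega ,\sigma >=\int\omega|_\sigma=0$ for every $\sigma\in <X\setminus K>$, so $\rho_X\omega$ lies in $C^q(X,<X\setminus K>)\subseteq C_c^qX$. Hence $\rho_c$ is a well-defined chain map respecting products, and for each finite $K$ it restricts to $\rho_{(X,<X\setminus K>)}:A^*(X,<X\setminus K>)\to C^*(X,<X\setminus K>)$, which induces a multiplicative isomorphism on cohomology by the Corollary.

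Next, by naturality of $\rho$ these maps are compatible with the structure maps of the two directed systems $\{A^*(X,<X\setminus K>)\}_K$ and $\{C^*(X,<X\setminus K>)\}_K$ as $K$ ranges over the finite subsimplicial sets of $X$. Since homology commutes with filtered colimits and a filtered colimit of isomorphisms is an isomorphism, $\rho$ induces a multiplicative isomorphism
\[ colim_K\, HA^*(X,<X\setminus K>)\;\longrightarrow\;colim_K\, HC^*(X,<X\setminus K>), \]
multiplicativity holding for the product on the colimit described just before Lemma~\ref{relative}.

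Finally I would assemble the square whose top arrow is $(\rho_c)_*:HA_c^*X\to HC_c^*X$, whose bottom arrow is the isomorphism just produced, and whose vertical arrows are $\eta$ from Lemma~\ref{relative} and its analogue for $C^*$ (valid since, as remarked, the same results hold with $A^*$ replaced by $C^*$). This square commutes on the nose: on a representative $\omega\in A_c^qX$ every arrow is given by the same integration formula followed by inclusions of complexes. Three of its four arrows are multiplicative isomorphisms, so $(\rho_c)_*$ is one too, which is the claim.

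Because everything here is formal once the relative de Rham theorem is available, I do not anticipate a serious obstacle. The one point deserving care is the standing identification of $C^*$ with the normalised complex $NC^*$ in the compactly supported context: one must verify that the Eilenberg--Mac Lane chain homotopy equivalence $C^*\to NC^*$ is compatible with restriction to the subcomplexes $<X\setminus K>$, so that it passes to the colimit and identifies $C_c^*X$ with $NC_c^*X$. This needs only the naturality of that equivalence together with the fact that a simplex is degenerate in $X$ precisely when it is degenerate in each subsimplicial set containing it.
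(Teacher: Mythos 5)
Your proposal is correct and follows essentially the same route as the paper: identify $HA_c^*X$ and $HC_c^*X$ with the colimits of relative cohomologies via Lemma~\ref{relative} and its $C^*$ analogue, then invoke the relative de Rham isomorphism and naturality to pass to the colimit. The extra details you supply (well-definedness of $\rho_c$, compatibility with the structure maps, and the normalised-complex identification) are points the paper leaves implicit, but the argument is the same.
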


\begin{proof}
By \hyperref[relative]{Lemma \ref*{relative}} and the variant for $C^*$, we have isomorphisms
\[ HA_c^*X\to colim_K(HA^*(X,<X\setminus K>))\]
\[ HC_c^*X\to colim_K(HC^*(X,<X\setminus K>))\]
and so by the relative de Rham theorem (and naturality of the above isomorphisms), $\rho_c$ is a homology isomorphism.

\end{proof}

\bibliographystyle{plain}

\bibliography{thesisreferences}

\end{document}